\theoremstyle{plain}
\newtheorem{theorem}{Theorem}[section]
\newtheorem{remark}[theorem]{Remark}
\numberwithin{equation}{section}
\begin{document}

\title[The singularities for a periodic transport equation]{The singularities for a periodic transport equation}

\author[Yong Zhang, Fei Xu, Fengquan Li]{Yong Zhang, Fei Xu, Fengquan Li}

\address[Yong Zhang]{ School of Mathematical Sciences, Dalian University
of Technology, Dalian, 116024, China} \email{18842629891@163.com}

\address[Fei Xu]{School of Mathematical Sciences, Dalian University
of Technology, Dalian, 116024, China}

\address[Fengquan Li]{School of Mathematical Sciences, Dalian University
of Technology, Dalian, 116024, China}

\begin{abstract}
 In this paper, we consider a 1D periodic transport equation with nonlocal flux and fractional dissipation
$$
u_{t}-(Hu)_{x}u_{x}+\kappa\Lambda^{\alpha}u=0,\quad (t,x)\in R^{+}\times S,
$$
where $\kappa\geq0$, $0<\alpha\leq1$ and $S=[-\pi,\pi]$. We first establish the local-in-time well-posedness for this transport equation in $H^{3}(S)$. In the case of $\kappa=0$, we deduce that the solution, starting from the smooth and odd initial data, will develop into singularity in finite time. If adding a weak dissipation term $\kappa\Lambda^{\alpha}u$, we also prove that the finite time blowup would occur.
	\end{abstract}
\maketitle
 {\bf Key Words: singularity; nonlocal flux; fractional dissipation; odd initial data}
%{Global solutions;  Oldroyd-B model; inviscid limit;  Besov space}

%\noindent {\bf Mathematics Subject Classification (2010)} {76A10; 76D03 }
\section{Introduction}

We will discuss the following equations
 \begin{equation}\label{e11}
 \left\{ \begin{array}{ll}
 u_{t}-(H^{a}\partial_{x}^{b}u)u_{x}+\kappa\Lambda^{\alpha}u=0, ~~~ (t,x)\in R^{+}\times S, \\
u(0,x)=\varphi(x),
 \end{array} \right.
 \end{equation}
 where $a,b\in\{0,1\}$, $Hu(x)$ is the Hilbert transform of $u(x)$ defined by (see \cite{13})
 $$
 Hu(x)=\frac{1}{2\pi}P.V.\int^{\pi}_{-\pi}\frac{u(x-y)}{tan\frac{y}{2}}dy
 $$
 and $H^{0}$ denotes the identity operator.
$\Lambda^{\alpha}u=(-\partial_{xx})^{\frac{\alpha}{2}}u$ is defined by the Fourier transform
$$
\widehat{\Lambda^{\alpha}u}(\xi)=|\xi|^{\alpha}\hat{u}(\xi).
$$
Here $\kappa\geq0$ is viscosity coefficient which controls the strength of dissipation and $0<\alpha\leq2$ controls the magnitude of the dissipation.

If $a=b=0$, (\ref{e11}) is transformed into the classical Brugers equation with fractional dissipation. This equation has been investigated in \cite{1,2,3}. The authors in \cite{1,2} proved the occurrence of shock-type singularities in supercritical case with responding to $0<\alpha<1$ and the global existence in critical case corresponding with $\alpha=1$ and subcritical case corresponding with $1<\alpha\leq2$. In \cite{3}, the authors studied comprehensively the existence, uniqueness, blow up and regularity properties of solutions.

If $a=1, b=0$, (\ref{e11}) can be written a famous nonlocal transport equation, which was firstly considered by A.C\'{o}rdoba et. in \cite{4}, where the authors concluded that the solutions would develop into gradient blow-up as $\kappa=0$ and exist globally as $\kappa>0, \alpha>1$. References \cite{5,6} are recommended for some recent results on this transport equation.

 If $a=0, b=1$, (\ref{e11}) can be reduced to
\begin{equation}\label{e12}
 \left\{ \begin{array}{ll}
 u_{t}-u^{2}_{x}+\kappa\Lambda^{\alpha}u=0, ~~~~ (t,x)\in R^{+}\times S, \\
 u(0,x)=\varphi(x),
 \end{array} \right.
 \end{equation}
 which can be regarded as a fractional order heat equation with a special nonlinearity term $u^{2}_{x}$. On the other hand, it also can be viewed as another form of Burgers equation. For example, taking derivative to (\ref{e12}) with respect to $x$ and letting $v=u_{x}$, then we obtain that $v_{t}-2vv_{x}+\kappa\Lambda^{\alpha}v=0$. Moreover, we can deduce
 $$
 \frac{d}{dt}\|u(t)\|_{H^{1}[\pi,\pi]}\leq0,
 $$
which implies that $\|u(t)\|_{L^{\infty}[-\pi,\pi]}\leq\|u(t)\|_{H^{1}[-\pi,\pi]}\leq\|\varphi\|_{H^{1}[-\pi,\pi]}$. Hence the solution to (\ref{e12}) would not blow up.

In this paper, we mainly consider the case of $a=b=1$, that is to say
 \begin{equation}\label{e13}
 \left\{ \begin{array}{ll}
 u_{t}-(Hu)_{x}u_{x}+\kappa\Lambda^{\alpha}u=0, ~~~~ (t,x)\in R^{+}\times S, \\
 u(0,x)=\varphi(x),
 \end{array} \right.
 \end{equation}
 which is a simplified equation of a class of ill-posed problems arising in the theory of vortex sheets (see \cite{12,14}). As before, if taking derivative to (\ref{e13}) with respect to $x$ and letting $\theta=u_{x}$, we would get
$$
 \left\{ \begin{array}{ll}
 \theta_{t}-(\theta H\theta)_{x}+\kappa\Lambda^{\alpha}\theta=0, ~~~~ (t,x)\in R^{+}\times S, \\
 \theta(0,x)=f(x),
 \end{array} \right.
$$
where $f(x)=\varphi_{x}(x)$. This is a 1D model of the quasi-geostrophic equation (see \cite{7,8,9,10,11}).
 In \cite{9}, the authors showed that there is no $C^{1}([-\pi,\pi]\times[0,+\infty))$ solutions in the case of $\kappa=0$ or $\kappa>0, \alpha=1$. In fact, our results in this paper are consistent with the conclusion in \cite{9}. Besides, we also make up the gap of $\kappa>0, 0<\alpha<1$ by using a different method. For the non-periodic case, the global existence for strictly positive initial data $f(x)>0$ has been proved in \cite{10}. Other important results on this equation can be seen in \cite{11}.

The paper is organized as follows. In section 2, we first give the local well-posedness result of equation (\ref{e13}), and then state the main theorems on blow-up. In section 3, we try to reduce the equation (\ref{e13}) to a family of ordinary differential equations under a class of given initial data. Section 4 is devoted to the proof of the main results on blow-up.

 \section{Local well-posedness and main results}\label{Sec.2}

Before stating to state the main results, we first discuss the local well-posedness result of the problem (\ref{e13}). Combining a priori bound with the compactness argument, one may be able to establish local well-posedness for (\ref{e13}) in $H^{3}[-\pi,\pi]$. The details of proof are the same as that of \cite{15}. Below we merely show how to obtain a priori bound for $\|u\|_{H^{3}[-\pi,\pi]}$, and other parts of the proof are omitted.\\

\begin{theorem}\label{th21}
If $\varphi\in H^{3}[-\pi,\pi]$, then there exists a $T>0$ such that the problem (\ref{e13}) has a unique solution $u\in C([0,T),H^{3}[-\pi,\pi])\cap C^{1}([0,T),H^{2}[-\pi,\pi])$.
\end{theorem}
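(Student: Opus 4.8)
The plan follows the classical scheme — solve regularized problems, prove an a priori bound in $H^{3}$ uniform in the regularization, and pass to the limit — and, as the statement signals by citing \cite{15}, only the a priori bound is really particular to \eqref{e13}. For the regularization I would replace \eqref{e13} by a family of problems globally solvable in $H^{3}(S)$ by ODE/semigroup methods, e.g.\ by inserting Friedrichs mollifiers $J_{\varepsilon}$ into the quadratic term and treating $-\kappa\Lambda^{\alpha}u$ through the analytic semigroup it generates on $H^{3}$; this yields solutions $u^{\varepsilon}$ whose size and lifespan must be bounded independently of $\varepsilon$.

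\emph{A priori bound.} Differentiate the equation three times in $x$, pair with $\partial_{x}^{3}u$ in $L^{2}(S)$, and integrate by parts. The dissipative term contributes $-\kappa\|\Lambda^{\alpha/2}\partial_{x}^{3}u\|_{L^{2}}^{2}\le 0$ and may be dropped, so the bound will be uniform in $\kappa\ge 0$; the contributions of the orders $0\le j\le 2$ are immediately $\lesssim\|u\|_{H^{3}}^{3}$. At the top order one is left with $\int_{S}\partial_{x}^{3}\big((Hu)_{x}u_{x}\big)\,\partial_{x}^{3}u\,dx$. Using that $(Hu)_{x}=Hu_{x}$, that $H$ commutes with $\partial_{x}$ and is an $L^{2}$-isometry on mean-zero functions, and the one-dimensional embedding $H^{1}(S)\hookrightarrow L^{\infty}(S)$ (so that e.g.\ $\|Hu_{xx}\|_{L^{\infty}}\lesssim\|u_{xx}\|_{H^{1}}\lesssim\|u\|_{H^{3}}$), the Leibniz expansion of this integral, after one integration by parts, splits into terms that are all $\lesssim\|u\|_{H^{3}}^{3}$ save the single genuinely delicate one, $\int_{S}u_{x}\,\big(H\partial_{x}^{4}u\big)\,\partial_{x}^{3}u\,dx$, which carries one spatial derivative too many.

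\emph{The obstacle, and conclusion.} This last term is the heart of the matter: a plain integration by parts only reproduces it, so one must exploit the antisymmetry of the Hilbert transform through a commutator estimate — concretely the $L^{2}$-boundedness of a Calderón-type commutator $[H,a]\partial_{x}$ with operator norm $\lesssim\|a_{x}\|_{L^{\infty}}$, applied with $a=u_{x}$ (equivalently, a Kato–Ponce commutator estimate, or the Cotlar–Tricomi identity $H(fHg+gHf)=(Hf)(Hg)-fg$) — which exposes the hidden cancellation and bounds the term again by $\|u\|_{H^{3}}^{3}$. Collecting everything gives $\frac{d}{dt}\|u^{\varepsilon}\|_{H^{3}}^{2}\le C\|u^{\varepsilon}\|_{H^{3}}^{3}$ with $C$ independent of $\varepsilon$ and $\kappa$, hence $\|u^{\varepsilon}(t)\|_{H^{3}}\le 2\|\varphi\|_{H^{3}}$ on a common interval $[0,T]$ with $T\simeq\|\varphi\|_{H^{3}}^{-1}$, by comparison with $y'=Cy^{3/2}$. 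From this and the equation one reads off $u^{\varepsilon}_{t}$ bounded in $H^{2}$ uniformly — since $(Hu)_{x}u_{x}$ is a product of $H^{2}$ functions ($H^{2}(S)$ being a Banach algebra in one dimension) and $\Lambda^{\alpha}u\in H^{3-\alpha}\subset H^{2}$ because $\alpha\le 1$ — so by Aubin–Lions a subsequence converges in $C([0,T];H^{3-\delta})$ to a solution of \eqref{e13} lying in $C([0,T];H^{3})\cap C^{1}([0,T];H^{2})$. Uniqueness (which also upgrades the $H^{3}$-continuity from weak to strong) follows from the same energy estimate applied to the difference $w=u-\tilde u$ of two solutions with the same datum, carried out one derivative below the solution regularity — which is all that is needed, since no regularity of $w$ must be propagated — together with Grönwall's inequality, exactly as in \cite{15}. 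I expect the commutator estimate for the top-order term to absorb essentially all of the difficulty.
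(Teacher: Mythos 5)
Your proposal follows essentially the same route as the paper: a regularization/compactness scheme deferred to \cite{15}, with the only problem-specific ingredient being the $H^{3}$ energy estimate $\frac{d}{dt}\|u\|_{H^{3}}^{2}\le C\|u\|_{H^{3}}^{3}$ and the resulting lifespan $T\sim\|\varphi\|_{H^{3}}^{-1}$. If anything, your treatment is more complete than the paper's, whose displayed estimate absorbs the dangerous top-order term $\int_{S}u_{x}\,(\Lambda\partial_{x}^{3}u)\,\partial_{x}^{3}u\,dx$ without comment, whereas you correctly isolate it and supply the commutator/antisymmetry argument needed to close the bound.
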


\begin{proof}
Taking three derivatives to equation (\ref{e13}) and multiplying by $u_{xxx}$ over $[\pi,\pi]$, we have that
\begin{align}\label{e21}
\frac{1}{2}\frac{d}{dt}\|u_{xxx}\|_{L^{2}[\pi,\pi]}^{2}
&=\int^{\pi}_{-\pi}[(Hu)_{x}u_{x}]_{xxx}u_{xxx}dx-\kappa\|\Lambda^{\frac{\alpha}{2}}u_{xxx}\|_{L^{2}[\pi,\pi]}^{2}  \nonumber\\
&\leq \int^{\pi}_{-\pi} [(\Lambda u_{x})u_{x}+(\Lambda u) u_{xx})]_{xx}u_{xxx}dx  \nonumber\\
&\leq (\|\Lambda u_{x}\|_{L^{\infty}[-\pi,\pi]}+\|u_{xx}\|_{L^{\infty}[-\pi,\pi]})\int^{\pi}_{-\pi}u^{2}_{xxx}+(\Lambda u_{xx})u_{xxx}dx.
\end{align}
Thanks to the domain is limited to $[-\pi,\pi]$, the Sobolev embedding theorem and Plancherel formula can be used to show that
 \begin{equation}\label{e22}
\frac{d}{dt}\|u\|_{H^{3}[\pi,\pi]}^{2}\leq c\|u\|_{H^{3}[\pi,\pi]}^{3},
 \end{equation}
 where $c$ is a positive constant.
 Furthermore, a priori bound
 \begin{equation}\label{e23}
 \|u\|_{H^{3}[\pi,\pi]}\leq\frac{\|\varphi\|_{H^{3}[\pi,\pi]}}{1-c\|\varphi\|_{H^{3}[\pi,\pi]}t}
 \end{equation}
 follows.
\end{proof}

We are now in a position to state the blow-up results. It's known that the smooth solution $ u(t,x)$ of (\ref{e13}) can be expanded by
\begin{equation}\label{e24}
 u(t,x)=\sum_{n=-\infty}^{\infty}u_{n}(t)e^{inx},~~u_{n}(t)\in \mathbb{C}.
\end{equation}
Here we assume that the initial data $\varphi\in H^{3}[-\pi,\pi]$ is odd with the form of
\begin{equation}\label{e25}
\varphi(x)=\sum_{n=1}^{\infty}A_{n}sin(nx),
\end{equation}
where $A_{n}$ are constants. In the case $\kappa\geq 0$ and $0<\alpha\leq 1$, we establish that\\

\begin{theorem}\label{th22}(singularity for $\kappa=0$)
Assume there exists a positive number $\delta$ such that $A_{n}\geq\frac{2\delta}{n^{5}}$, then the solution $u(t,x)$ to equation (\ref{e13}) with $\kappa=0$ would blow up at some finite time $T_{1}$, i.e.
$$
\|u(t,\cdot)\|_{L^{2}(-\pi,\pi]}\rightarrow\infty, ~~~as ~~~t\rightarrow T_{1}.
$$
Moreover, there is a rough estimate
$$T_{1}=(\frac{1}{2\delta})^{+}.$$
\end{theorem}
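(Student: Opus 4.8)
The plan is to exploit the special structure of odd initial data. Substituting the Fourier expansion (\ref{e24}) with $u_n = -u_{-n}$ into the equation with $\kappa = 0$, the nonlinearity $(Hu)_x u_x$ becomes a quadratic convolution in the coefficients; since $Hu$ multiplies each mode $u_n e^{inx}$ by $-i\,\mathrm{sgn}(n)$ and then $\partial_x$ brings down $in$, the term $(Hu)_x u_x$ translates into an infinite coupled system of ODEs for the $u_n(t)$. The first idea is to decouple, or at least obtain a closed differential inequality, for a carefully chosen functional of the low modes — most naturally something like $\sum_n n u_n(t)$ or a single coefficient such as $u_1(t)$, or the quantity $\int_{-\pi}^{\pi} (Hu) u\, dx$. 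Because $\varphi$ is odd and $A_n \geq 2\delta/n^5 > 0$, one expects to be able to show some positive moment of the solution stays positive and satisfies a Riccati-type inequality $y' \geq c\, y^2$, forcing blow-up by time $\sim 1/(2\delta)$.

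First I would write down the mode equations explicitly: plugging (\ref{e24}) into (\ref{e13}) and matching the coefficient of $e^{inx}$ gives $\dot u_n = \sum_{j+k=n} (\text{sign factor})\, jk\, u_j u_k$, where the sign factor comes from $\mathrm{sgn}(j)$ in the Hilbert transform. Next I would identify a linear functional $L(u) = \sum_n c_n u_n$ (with $c_n$ chosen to match the hypothesis $A_n \ge 2\delta/n^5$, so presumably $c_n \sim n^{?}$, likely built so that $L(\varphi) \ge \delta$ or similar) for which the quadratic right-hand side, after using the sign structure and positivity of all $u_n(0)$, obeys $\dot L \geq 2 L^2$ (or $\dot y \ge y^2$ with $y = 2L$). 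The decay $A_n \geq 2\delta/n^5$ is exactly what is needed to control the tail of the series and keep $L$ finite and bounded below initially; the $n^{-5}$ rate presumably matches the $H^3$ regularity ($n^2 \cdot n^{-5} = n^{-3}$ summable) while still giving a divergent-enough weighted sum to close the Riccati bound. Integrating $\dot y \geq y^2$ from $y(0) \ge 2\delta$ yields $y(t) \ge 1/(\frac{1}{2\delta} - t)$, hence $y$, and therefore $\|u(t,\cdot)\|_{L^2}$, blows up no later than $T_1 = (1/(2\delta))^+$.

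The main obstacle, I expect, is controlling the infinitely many nonlinear interaction terms in $\dot L$: one must show that the "bad" cross terms either cancel by the sign structure of the Hilbert transform or are dominated by the "good" self-interaction term that produces the $L^2$ lower bound, and this must hold uniformly in time on $[0,T)$ — which in turn requires propagating the positivity (or at least the sign/size control) of the relevant coefficients $u_n(t)$ forward in time, not just at $t=0$. A secondary technical point is that the blow-up criterion from Theorem \ref{th21} is phrased in $H^3$, so to conclude blow-up of $\|u\|_{L^2}$ one either works with the $L^2$ norm directly (noting $\|u\|_{L^2}^2 = 2\pi \sum |u_n|^2 \ge$ a suitable multiple of $L(u)^2$ by Cauchy–Schwarz, using the summability of $\sum c_n^2$) or invokes that a solution which stays in $H^3$ on all of $[0,T_1]$ would contradict the finiteness of $\|u\|_{L^2}$ there. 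I would handle this by the Cauchy–Schwarz route, which keeps everything at the level of the explicitly available ODE system.
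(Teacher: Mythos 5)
Your proposal correctly sets up the framework the paper also uses (Fourier expansion, the odd symmetry $u_{-n}=-u_n$, reduction to the quadratic convolution ODE system for the coefficients, and the need to propagate positivity of the coefficients forward in time), but the core of the argument is left unexecuted, and that is where the actual content lies. You never exhibit the weights $c_n$ of the functional $L=\sum_n c_n u_n$, nor verify the claimed inequality $\dot L\geq 2L^2$. For the system at hand, $\dot w_n=\sum_{l=1}^{n-1} l w_l\,(n-l)w_{n-l}$, the Riccati bound for $L=\sum c_n w_n$ requires the convolution inequality $c_{l+m}\,lm\geq c_l c_m$ for all $l,m\geq 1$ (together with positivity of all $w_n(t)$). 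Your suggested scaling $c_n\sim n^{-5}$ or anything matching the hypothesis $A_n\geq 2\delta/n^5$ fails this already at $l=m=1$; weights that do satisfy it (e.g.\ $c_n=n$) give $L(0)$ only slightly above $\delta$ from the hypothesis, so the resulting blow-up time $1/L(0)$ does not recover the stated estimate $T_1=(\tfrac{1}{2\delta})^{+}$. So as written there is a genuine gap: the existence of a functional closing the Riccati inequality with the right constant is precisely what needs proof, and it is not clear it can be arranged.

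The paper avoids this difficulty entirely by working mode by mode rather than with a single scalar functional. After reducing to real coefficients $w_n=iu_n$ with $w_n(0)=A_n/2$ and writing the Duhamel (integral) form of the ODEs, it proves by induction on $n$ the pointwise lower bound $n w_n(t)\geq \frac{\delta}{n^4}\left(\frac12+\delta t\right)^{n-1}$; the induction step only uses the single cross term $l=1$ (more precisely the crude bound $n\sum_{l}\frac{1}{l^4(n-l)^4}\ge \frac{n-1}{n^3}\cdot\frac{1}{(n-1)^{\,3}}\cdots$, reduced to $\frac{n-1}{n^4}$) plus positivity of all lower modes. Blow-up then follows from Parseval: $\|u\|_{L^2}^2\ge 8\pi\delta^2\sum_n n^{-10}\left[(\tfrac12+\delta t)^2\right]^{n-1}$ diverges as soon as $\tfrac12+\delta t>1$, i.e.\ for any $t>\tfrac{1}{2\delta}$, which is exactly where the time $T_1=(\tfrac{1}{2\delta})^{+}$ comes from. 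If you want to salvage your approach, you would either have to carry out the inductive lower bound (at which point you are reproducing the paper's proof) or genuinely produce and verify a functional satisfying the convolution inequality with the correct initial size; the latter is not a routine step.
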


\begin{theorem}\label{th23} (singularity for $\kappa>0, ~\alpha=1$)
Assume there exists a positive number $\delta$ such that $A_{n}\geq\frac{2\delta}{n^{5}}$ and $0<\kappa<\frac{2\delta}{2e-1}$, then the solution $u(t,x)$ to equation (\ref{e13}) would blow up at $T_{2}=\frac{1}{\kappa}$, i.e.
$$
\|u(t,\cdot)\|_{L^{2}(-\pi,\pi]}\rightarrow\infty, ~~~as ~~~t\rightarrow T_{2}=\frac{1}{\kappa}.
$$
\end{theorem}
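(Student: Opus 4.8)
The plan is to push through the odd-data reduction that Section~3 sets up and then isolate a scalar quantity obeying a Riccati-type inequality in which the dissipation enters only linearly. Since $\varphi$ in \eqref{e25} is odd, the solution stays odd on its interval of existence, so we may write $u(t,x)=\sum_{n\ge1}a_n(t)\sin(nx)$ with $a_n(0)=A_n$; substituting into \eqref{e13} and using $H\sin(nx)=-\cos(nx)$, $\Lambda\sin(nx)=n\sin(nx)$ together with the product-to-sum identities reduces the PDE to the triangular family of ordinary differential equations
\[
\dot a_k=\frac12\sum_{n=1}^{k-1}n(k-n)\,a_n a_{k-n}-\kappa k\,a_k,\qquad k\ge1 .
\]
Because the $k$-th equation couples only to $a_1,\dots,a_{k-1}$, truncation to the first $N$ modes is exact, and an induction on $k$ (at the wall $a_k=0$ the right-hand side is $\ge 0$) gives $a_k(t)\ge0$ for all $k$ throughout the existence interval, using $A_n\ge 2\delta/n^5>0$.

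For the core step I would track the weighted first moment $M(t):=\sum_{n\ge1}n\,a_n(t)=u_x(t,0)$, finite while $u\in H^3$. Multiplying the ODEs by $n$ and summing (a Cauchy-product manipulation, $\sum_n n\sum_{m<n}m(n-m)a_m a_{n-m}=2M\sum_n n^2 a_n$) gives
\[
\dot M=(M-\kappa)\,P,\qquad P(t):=\sum_{n\ge1}n^2 a_n(t).
\]
Positivity yields $P\ge M$, while $M(0)=\sum_n nA_n\ge 2\delta\sum_n n^{-4}>2\delta>\kappa$ (the last inequality because $\kappa<\frac{2\delta}{2e-1}<2\delta$), so $M$ never returns below $\kappa$, is nondecreasing, and satisfies $\dot M\ge M(M-\kappa)$. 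Integrating this Riccati inequality forces $M(t)\to\infty$ at a finite time at most $\frac1\kappa\log\frac{M(0)}{M(0)-\kappa}$, and the smallness of $\kappa$ relative to $\delta$ keeps this bound $\le T_2=\frac1\kappa$. An alternative that delivers the $L^2$ statement directly is to propagate the mode-by-mode lower bounds $a_k(t)\ge c(t)/k^5$, where $c$ solves a scalar Riccati equation built from the elementary estimate $\sum_{m=1}^{k-1}m^{-4}(k-m)^{-4}\ge 2k^{-4}$, with $c(0)=2\delta$ and $c$ blowing up by time $T_2$; then $\|u\|_{L^2}^2=\pi\sum_k a_k^2\ge\pi\,c(t)^2\sum_k k^{-10}\to\infty$.

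Finally, blow-up of $M$ is promoted to blow-up of $\|u(t,\cdot)\|_{L^2}$: since $M(t)$ is dominated by $\|u(t)\|_{H^2}$, the $H^3$ solution of Theorem~\ref{th21} cannot be continued past $t=1/\kappa$, and a lower bound of the type just described shows the $L^2$ norm itself diverges as $t\to T_2=\frac1\kappa$. The hard part is the core step: for the relevant profiles $a_k\sim k^{-5}$ the dissipation $-\kappa k\,a_k\sim-\kappa k^{-4}$ scales exactly like the factor $k^{-4}$ produced by the nonlinear cascade (this is precisely why $\alpha=1$ is critical here), so it cannot simply be discarded — one must combine positivity with the combinatorial lower bound on $\sum m^{-4}(k-m)^{-4}$ to keep the quadratic growth dominant, and must track the exponential decay $e^{-\kappa t}$ of the lowest modes carefully enough to land on the threshold $\kappa<\frac{2\delta}{2e-1}$ and the blow-up time $1/\kappa$. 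The remaining points — convergence of the series defining $M$ and $P$ and legitimacy of termwise differentiation on the existence interval, and the characterisation of the $H^3$ blow-up time by $\limsup\|u\|_{H^3}=\infty$ — are routine.
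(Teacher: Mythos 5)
Your first (moment) argument is sound and genuinely different from the paper's: the identity $\dot M=(M-\kappa)P$ with $P\ge M$ and $M(0)\ge 2\delta\sum_{n}n^{-4}>2\delta>\kappa$ does give a Riccati inequality whose solution escapes to infinity by time $\frac1\kappa\log\frac{M(0)}{M(0)-\kappa}<\frac1\kappa$, and this correctly shows that the $H^3$ solution of Theorem \ref{th21} cannot be continued up to $t=1/\kappa$. The paper instead works entirely at the level of individual modes: from the Duhamel form \eqref{e317} it proves by induction the pointwise lower bound $nw_n(t)\ge\frac{\delta}{n^4}\left(\frac12+\delta t\right)^{n-1}e^{-n\kappa t}$ and then applies Parseval, so that the series diverges exactly when the geometric ratio $\rho(t)=\left(\frac12+\delta t\right)e^{-\kappa t}$ exceeds $1$, which at $t=1/\kappa$ is precisely the hypothesis $\kappa<\frac{2\delta}{2e-1}$. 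Your route is cleaner and gives a sharper bound on the lifespan, but it measures the singularity in a stronger norm (via $u_x(t,0)\lesssim\|u\|_{H^2}$), whereas the theorem asserts divergence of the $L^2$ norm.

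The genuine gap is in the step you rely on to recover the $L^2$ statement. The ansatz $a_k(t)\ge c(t)/k^5$ with a single $k$-independent, blowing-up $c(t)$ cannot hold: the first mode satisfies $\dot a_1=-\kappa a_1$ exactly, so $a_1(t)=A_1e^{-\kappa t}$ decays, and the claimed inequality already fails at $k=1$ for any growing $c$; the same objection applies to every fixed $k$, since each $a_k$ obeys a linear ODE with forcing that is bounded as long as the finitely many lower modes are. The lower bound that actually propagates must be geometric in $k$, of the form $a_k\gtrsim k^{-5}\rho(t)^{k-1}e^{-\kappa t}$ as above, so that every individual mode may stay bounded while the tail of the Parseval series carries the divergence; the threshold $\kappa<\frac{2\delta}{2e-1}$ is exactly the condition $\rho(1/\kappa)>1$ and cannot be produced by a scalar Riccati equation for $c$. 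To complete your proposal you should either replace the alternative by the paper's geometric induction, or weaken the conclusion to blow-up of $\|u(t,\cdot)\|_{H^2}$ before $t=1/\kappa$.
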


\begin{theorem}\label{th24} (singularity for $\kappa>0, ~0<\alpha<1$)
Assume there exists a positive number $\delta$ such that $A_{n}\geq\frac{2\delta}{n^{5}}$ and $0<\kappa<\frac{2\delta}{2e-1}$, then the solution $u(t,x)$ to equation (\ref{e13}) would blow up at $T_{2}=\frac{1}{\kappa}$, i.e.
$$
\|u(t,\cdot)\|_{L^{2}(-\pi,\pi]}\rightarrow\infty, ~~~as ~~~t\rightarrow T_{2}=\frac{1}{\kappa}.
$$
\end{theorem}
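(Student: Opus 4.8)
The plan is to follow the scheme of Theorems~\ref{th22} and \ref{th23}. By Section~3, writing the (odd) solution as $u(t,x)=\sum_{n\ge1}a_n(t)\sin nx$ with $a_n(0)=A_n$ and using $Hu=-\sum_n a_n\cos nx$, one checks that the contributions to the Fourier coefficients of $(Hu)_xu_x$ coming from $\sin(n-m)x$ cancel in pairs, leaving the triangular system
\[
\dot a_k=\tfrac12\sum_{n=1}^{k-1}n(k-n)\,a_na_{k-n}-\kappa k^{\alpha}a_k,\qquad a_k(0)=A_k,
\]
which is exactly the family of ODEs produced in Section~3. Since the nonlinearity $N_k(a):=\tfrac12\sum_{n=1}^{k-1}n(k-n)a_na_{k-n}$ involves only $a_1,\dots,a_{k-1}$ and is nondecreasing in each of them on the positive cone, the system is solvable mode by mode; Duhamel's formula $a_k(t)=e^{-\kappa k^{\alpha}t}A_k+\int_0^t e^{-\kappa k^{\alpha}(t-s)}N_k(s)\,ds$ together with $A_k\ge 2\delta/k^5>0$ then gives $a_k(t)>0$ throughout the life span, and $\|u(t)\|_{L^2}^2=\pi\sum_k a_k(t)^2$, so it suffices to force one weighted sum of the $a_k$ to blow up.

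Next I would set up a scalar differential inequality. Taking $y(t)=\sum_{n\ge1}\phi_n a_n(t)$ with $\phi_n=\tfrac1{2n}$, the identity $\sum_k\phi_k\sum_{n+m=k}nm\,a_na_m=\sum_{n,m}\phi_{n+m}nm\,a_na_m$ and the elementary bound $\phi_{n+m}nm\ge\phi_n\phi_m$ (whose worst case $n=m=1$ gives equality) yield
\[
\dot y\ \ge\ \tfrac12 y^{2}-\kappa\sum_{n\ge1}\tfrac{n^{\alpha}}{2n}\,a_n .
\]
For $\alpha=1$ the last term is removed exactly by passing to the time-dependent weight $\phi_n(t)=\tfrac1{2n}e^{\kappa n t}$, which is the mechanism behind Theorem~\ref{th23}; the point where the argument for $0<\alpha<1$ must depart from that proof is precisely the treatment of this dissipative term, and the new input is the subadditivity $(n+m)^{\alpha}\le n^{\alpha}+m^{\alpha}$. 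The cleanest way to exploit it is a monotone comparison: since $k^{\alpha}\le k$ for $k\ge1$, a termwise induction (with integrating factor $e^{\kappa k^{\alpha}t}$, using that $N_k$ is order-preserving) shows $a_k(t)\ge\tilde a_k(t)$, where $\tilde a_k$ solves the $\alpha=1$ system with the same data; hence $\|u(t)\|_{L^2}\ge\|\tilde u(t)\|_{L^2}$, and Theorem~\ref{th23} forces $\|u(t)\|_{L^2}\to\infty$ no later than $t=1/\kappa$. The role of the hypotheses is then transparent: $A_n\ge2\delta/n^5$ makes $y(0)=\tfrac12\sum_n A_n/n\ge\delta\sum_n n^{-6}$ large, and $\kappa<\tfrac{2\delta}{2e-1}$ is exactly the smallness ensuring that the explicit solution of the resulting Riccati-type inequality blows up within time $T_2=1/\kappa$.

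The main obstacle, as indicated, is the dissipation: in contrast with the $\alpha=1$ case it cannot be cancelled by an integrating factor, because $e^{\kappa(n+m)^{\alpha}t}\le e^{\kappa n^{\alpha}t}e^{\kappa m^{\alpha}t}$ runs the wrong way once $\alpha<1$, and a fixed positive weight cannot absorb $\kappa\sum_n \tfrac{n^{\alpha}}{2n}a_n$ into a multiple of $y$. The safe route around this is the monotone comparison with the $\alpha=1$ dynamics described above; the only points requiring care are to justify that comparison for the infinite quadratically nonlinear system (legitimate because the system is triangular and the nonlinearity preserves the positive cone) and to transfer the life-span bound, with the understanding that ``blow-up at $T_2=1/\kappa$'' here means that the maximal existence time does not exceed $1/\kappa$.
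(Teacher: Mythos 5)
Your proposal is correct and follows essentially the same route as the paper: both arguments exploit $k^{\alpha}\le k$ together with the positivity of the Fourier modes in the triangular Duhamel system to dominate the $0<\alpha<1$ dynamics by the $\alpha=1$ ones --- the paper does this by re-running the induction of Theorem \ref{th23} with the same lower bound $nw_n(t)\ge \frac{\delta}{n^4}(\frac12+\delta t)^{n-1}e^{-n\kappa t}$ (its auxiliary function $h(t)$ encodes exactly your propagator comparison $e^{-\kappa k^{\alpha}(t-s)}\ge e^{-\kappa k(t-s)}$), while you compare the two solutions mode by mode and then invoke Theorem \ref{th23}. The Riccati-type inequality in your middle paragraph is never actually used in your final argument and can be dropped.
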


\begin{remark}
In Theorem \ref{th22}--\ref{th24}, the assumption on initial data $A_{n}\geq\frac{2\delta}{n^{5}}$ is required to be compatible with the convergence of series (\ref{e25}) in $H^{3}[-\pi,\pi]$. (For instance, we can take $A_{n}=\frac{2\delta+1}{n^{5}}$.) In addition, the exponential 5 at denominator in $\frac{2\delta}{n^{5}}$ can be replaced by any larger real number. Essentially, we need to restrict the Fourier coefficients $A_{n}$ of initial data to be positive due to the technique taken here .
\end{remark}

\section{The reduction of the equation (\ref{e13})}

Thanks to the reasonable assumptions of the initial data, the equation (\ref{e13}) can be reduced to a family of ordinary differential equations (ODEs) in this section. Moveover, we can write easily down its implicit solutions.

Firstly, let's reduce the equation in (\ref{e13}). The Hilbert transform of periodic smooth solution $u(t,x)$ in (\ref{e24}) can be expressed by
 \begin{equation}\label{e31}
Hu=-\sum_{n=-\infty}^{\infty}isgn(n)u_{n}(t)e^{inx}=-i\sum_{n=1}^{\infty}u_{n}(t)e^{inx}+i\sum_{n=1}^{\infty}u_{-n}(t)e^{-inx},~u_{n}(t)\in \mathbb{C}.
 \end{equation}
 Then (\ref{e24}) and (\ref{e31}) give
\begin{align}\label{e32}
(Hu)_{x}u_{x}
&=(\sum_{n=1}^{\infty}nu_{n}(t)e^{inx}+\sum_{n=1}^{\infty}nu_{-n}(t)e^{-inx})(i\sum_{n=1}^{\infty}nu_{n}(t)e^{inx}-i\sum_{n=1}^{\infty}nu_{-n}(t)e^{-inx})\nonumber\\
&=i(\sum_{n=1}^{\infty}nu_{n}(t)e^{inx})^{2}-i(\sum_{n=1}^{\infty}nu_{-n}(t)e^{-inx})^{2}.
 \end{align}
Let
\begin{equation}\label{e33}
a(t,x):=(\sum_{n=1}^{\infty}nu_{n}(t)e^{inx})^{2}=\sum_{n=1}^{\infty}a_{n}(t)e^{inx},
\end{equation}
where the coefficients $a_{n}(t)$ can be defined by
\begin{align}\label{e34}
a_{n}(t)
&=\frac{1}{2\pi}\int^{\pi}_{-\pi}a(t,x)e^{-inx}dx  \nonumber\\
&=\frac{1}{2\pi}\int^{\pi}_{-\pi}(\sum_{k=1}^{\infty}\sum_{l=1}^{\infty}lu_{l}e^{ilx}ku_{k}e^{ikx})e^{-inx}dx  \nonumber\\
&=\frac{1}{2\pi}\int^{\pi}_{-\pi}\sum_{k=1}^{\infty}\sum_{l=1}^{\infty}lu_{l}ku_{k}e^{i(l+k-n)x}dx  \nonumber\\
&=\sum_{k=1}^{\infty}\sum_{l=1}^{\infty}lu_{l}ku_{k}\delta(k+l-n)  \nonumber\\
&=\sum_{l=1}^{n-1}lu_{l}(n-l)u_{n-l}.
 \end{align}
It follows from (\ref{e33}) and (\ref{e34}) that
 \begin{equation}\label{e35}
a(t,x)=\sum_{n=1}^{\infty}[\sum_{l=1}^{n-1}lu_{l}(n-l)u_{n-l}]e^{inx}.
\end{equation}
A similar argument shows that
 \begin{equation}\label{e36}
b(t,x):=(\sum_{n=1}^{\infty}nu_{-n}(t)e^{-inx})^{2}=\sum_{n=1}^{\infty}[\sum_{l=1}^{n-1}lu_{-l}(n-l)u_{-n+l}]e^{-inx}.
\end{equation}
Therefore, we can obtain
\begin{align}\label{e37}
(Hu)_{x}u_{x}
&=i(\sum_{n=1}^{\infty}nu_{n}(t)e^{inx})^{2}-i(\sum_{n=1}^{\infty}nu_{-n}(t)e^{-inx})^{2} \nonumber\\
&=\sum_{n=1}^{\infty}[i\sum_{l=1}^{n-1}lu_{l}(n-l)u_{n-l}]e^{inx}-\sum_{n=1}^{\infty}[i\sum_{l=1}^{n-1}lu_{-l}(n-l)u_{-n+l}]e^{-inx}.
 \end{align}
Besides, the diffusion term $\Lambda^{\alpha}u$ also can be expressed by
 \begin{equation}\label{e38}
\Lambda^{\alpha}u(t,x)=\sum_{n=1}^{\infty}n^{\alpha}u_{n}(t)e^{inx}+\sum_{n=1}^{\infty}n^{\alpha}u_{-n}(t)e^{-inx}.
\end{equation}
Taking (\ref{e24}), (\ref{e37}) and (\ref{e38}) into (\ref{e13}) and equating coefficients of $e^{inx}$ and $e^{-inx}$ respectly, we can obtain the following ODEs for $\{u_{n}(t)\}_{n\geq1}$, $\{u_{-n}(t)\}_{n\geq1}$ and $u_{0}(t)$
\begin{equation}\label{e39}
 \left\{ \begin{array}{ll}
 \frac{du_{n}(t)}{dt}-i\sum_{l=1}^{n-1}lu_{l}(t)(n-l)u_{n-l}(t)+\kappa n^{\alpha}u_{n}(t)=0,  \\
 \frac{du_{0}(t)}{dt}=0,\\
 \frac{du_{-n}(t)}{dt}+i\sum_{l=1}^{n-1}lu_{-l}(t)(n-l)u_{-n+l}(t)+\kappa n^{\alpha}u_{-n}(t)=0.
 \end{array} \right.
 \end{equation}

Secondly let's reduce the initial data in (\ref{e13}). It's easy to see that
\begin{align}\label{e310}
\varphi(x)
&=u(0,x)=\sum_{n=-\infty}^{\infty}u_{n}(0)e^{inx}=\sum_{n=-\infty}^{\infty}[iu_{n}(0)sin(nx)+u_{n}(0)cos(nx)]  \nonumber\\
&=\sum_{n=1}^{\infty}[i(u_{n}(0)-u_{-n}(0))sin(nx)]+u_{0}(0)+\sum_{n=1}^{\infty}[(u_{n}(0)+u_{-n}(0))cos(nx)].
 \end{align}
 Comparing the coefficients of the initial data $\varphi(x)$ in (\ref{e25}) with ones in (\ref{e310}), we find
 \begin{equation}\label{e311}
 \left\{ \begin{array}{ll}
 A_{n}=i(u_{n}(0)-u_{-n}(0)),  \\
 u_{0}(0)=0,\\
u_{-n}(0)=-u_{n}(0),
 \end{array} \right.
 \end{equation}
which implies
 \begin{equation}\label{e312}
u_{n}(0)=\left\{ \begin{array}{ll}
 0,~~~~n=0,  \\
\frac{A_{n}}{2i},~~~n\geq 1.
 \end{array} \right.
 \end{equation}

Considering the ODEs (\ref{e39}) with initial condition (\ref{e312}), we can get that
  $u_{0}(t)=0$. It's worth noting that $\{-u_{n}(t)\}$ will solve the third equation in (\ref{e39}) if $\{u_{n}(t)\}$ solve the first equation in (\ref{e39}).
Hence the classical local well-posedness theorem of ODEs would ensure that
 \begin{equation}\label{e313}
u_{-n}(t)=-u_{n}(t).
 \end{equation}
Then the fact $u_{0}(t)=0$ and (\ref{e313}) allow us to write the solution $u(t,x)$ by
\begin{equation}\label{e314}
u(t,x)=\sum_{n=1}^{\infty}2iu_{n}(t)sin(nx).
 \end{equation}
On the other hand, $\{u_{n}(t)\}$ are coefficients of Fourier series (\ref{e24}), which yields
 \begin{equation}\label{e315}
u_{-n}(t)=\overline{u_{n}(t)}.
 \end{equation}
(\ref{e313}) and (\ref{e315}) imply that $\{u_{n}(t)\}$ are pure imaginary numbers with respect to $x$. For convenience, in the following we will take the real-valued functions $\{iu_{n}(t)\}$ as a whole, where $i$ is the imaginary unit. To sum up, if let
 $$
 w_{n}(t)=iu_{n}(t),
 $$
 the ODEs (\ref{e39}) with initial condition (\ref{e312}) would be reduced to the following equations only for $\{w_{n}(t)\}_{n\geq1}$
 \begin{equation}\label{e316}
 \left\{ \begin{array}{ll}
 \frac{dw_{n}(t)}{dt}-\sum_{l=1}^{n-1}lw_{l}(t)(n-l)w_{n-l}(t)+\kappa n^{\alpha}w_{n}(t)=0,  \\
 w_{n}(0)=\frac{A_{n}}{2}.
 \end{array} \right.
 \end{equation}
 The solutions to (\ref{e316}) can be written as
\begin{equation}\label{e317}
w_{n}(t)=\frac{A_{n}}{2}e^{-n^{\alpha}\kappa t}+e^{-n^{\alpha}\kappa t}\int^{t}_{0}e^{n^{\alpha}\kappa s}\sum_{l=1}^{n-1}lw_{l}(s)(n-l)w_{n-l}(s)ds,~~~n\geq1.
\end{equation}

\section{The proof of the main results}

In this section, we mainly give the proofs of blow-up results. Relying on the formal symmetry of solutions in (\ref{e317}), we will finish the proofs by the mathematical induction method.
\subsection{The proof of Theorem \ref{th22}}
\begin{proof}
As $\kappa=0$, (\ref{e317}) gives
\begin{equation}\label{e41}
nw_{n}(t)=\frac{nA_{n}}{2}+n\int^{t}_{0}\sum_{l=1}^{n-1}lw_{l}(s)(n-l)w_{n-l}(s)ds\geq0,~~for~~n\geq1.
\end{equation}
In addition, we claim that $\{nw_{n}(t)\}_{n\geq1}$ satisfy
\begin{equation}\label{e42}
nw_{n}(t)\geq\frac{\delta}{n^{4}}(\frac{1}{2}+\delta t)^{n-1}.
\end{equation}
Now we use the mathematical induction to prove this claim.

If $n=1$, then
$$
w_{1}(t)=\frac{A_{1}}{2}\geq\delta.
$$
As $1\leq l\leq n-1$ , we assume that
\begin{equation}\label{e43}
lw_{l}(t)\geq\frac{\delta}{l^{4}}(\frac{1}{2}+\delta t)^{l-1}
\end{equation}
holds. (\ref{e41}) and (\ref{e43}) imply that
\begin{equation}
\begin{aligned}
nw_{n}(t)
&=\frac{nA_{n}}{2}+n\int^{t}_{0}\sum_{l=1}^{n-1}lw_{l}(s)(n-l)w_{n-l}(s)ds  \nonumber\\
&\geq \frac{nA_{n}}{2}+\delta^{2}n\sum_{l=1}^{n-1}\frac{1}{l^{4}(n-l)^{4}}\int^{t}_{0}(\frac{1}{2}+\delta s)^{n-2}ds \nonumber\\
&\geq \frac{\delta}{n^{4}}+\delta^{2}\frac{n-1}{n^{4}}\int^{t}_{0}(\frac{1}{2}+\delta s)^{n-2}ds \nonumber\\
&=\frac{\delta}{n^{4}}(\frac{1}{2}+\delta t)^{n-1}+\frac{\delta}{n^{4}}(1-\frac{1}{2^{n-1}})\nonumber\\
&\geq \frac{\delta}{n^{4}}(\frac{1}{2}+\delta t)^{n-1},
\end{aligned}
 \end{equation}
and (\ref{e42}) follows.

 (\ref{e314}), (\ref{e42}) and Paserval equality show that
\begin{align}\label{e44}
\|u(t,\cdot)\|^{2}_{L^{2}(-\pi,\pi]}
&=2\pi\sum_{n=1}^{\infty}|2w_{n}(t)|^{2}=8\pi\sum_{n=1}^{\infty}|\frac{1}{n}nw_{n}(t)|^{2}  \nonumber\\
&\geq 8\pi\delta^{2}\sum_{n=1}^{\infty}\frac{[(\frac{1}{2}+\delta t)^{2}]^{n-1}}{n^{10}}.
 \end{align}
Let $g(t):=\frac{1}{2}+\delta t$ be bigger than 1, then the series in (\ref{e44}) would diverge by the basic fact of calculus. Thus, we only need to choose $T_{1}$ is bigger than $\frac{1}{2\delta}$ and the proof is completed.
\end{proof}

\subsection{The proof of Theorem \ref{th23}}
\begin{proof}
In this case, we also have from (\ref{e317})
\begin{equation}\label{e45}
nw_{n}(t)=\frac{nA_{n}}{2}e^{-n\kappa t}+ne^{-n\kappa t}\int^{t}_{0}e^{n\kappa s}\sum_{l=1}^{n-1}lw_{l}(s)(n-l)w_{n-l}(s)ds,~~~n\geq1.
\end{equation}
Similarly we can infer that the following estimate holds
\begin{equation}\label{e46}
nw_{n}(t)\geq\frac{\delta}{n^{4}}(\frac{1}{2}+\delta t)^{n-1}e^{-n\kappa t}.
\end{equation}
Here the mathematical induction method is used again.

As $n=1$, we have
$$
w_{1}(t)=\frac{A_{1}}{2}e^{-\kappa t}\geq\delta e^{-\kappa t}.
$$
For $1\leq l\leq n-1$, we assume that
\begin{equation}\label{e47}
lw_{l}(t)\geq\frac{\delta}{l^{4}}(\frac{1}{2}+\delta t)^{l-1}e^{-l\kappa t}.
\end{equation}
Hence we can obtain that from (\ref{e45}) and (\ref{e47})
\begin{equation}
\begin{aligned}
nw_{n}(t)
&=\frac{nA_{n}}{2}e^{-n\kappa t}+ne^{-n\kappa t}\int^{t}_{0}e^{n\kappa s}\sum_{l=1}^{n-1}lw_{l}(s)(n-l)w_{n-l}(s)ds  \nonumber\\
&\geq \frac{nA_{n}}{2}e^{-n\kappa t}+\delta^{2}n\sum_{l=1}^{n-1}\frac{1}{l^{4}(n-l)^{4}}e^{-n\kappa t}\int^{t}_{0}(\frac{1}{2}+\delta s)^{n-2}ds \nonumber\\
&\geq \frac{\delta}{n^{4}}e^{-n\kappa t}+\delta^{2}\frac{n-1}{n^{4}}e^{-n\kappa t}\int^{t}_{0}(\frac{1}{2}+\delta s)^{n-2}ds \nonumber\\
&=\frac{\delta}{n^{4}}e^{-n\kappa t}(\frac{1}{2}+\delta t)^{n-1}+\frac{\delta}{n^{4}}e^{-n\kappa t}(1-\frac{1}{2^{n-1}})\nonumber\\
&\geq \frac{\delta}{n^{4}}(\frac{1}{2}+\delta t)^{n-1}e^{-n\kappa t},
\end{aligned}
 \end{equation}
and (\ref{e46}) follows. On the other hand, (\ref{e314}), (\ref{e46}) and Paserval equality imply that
\begin{align}\label{e48}
\|u(t,\cdot)\|^{2}_{L^{2}(-\pi,\pi]}
&=2\pi\sum_{n=1}^{\infty}|2w_{n}(t)|^{2}=8\pi\sum_{n=1}^{\infty}|\frac{1}{n}nw_{n}(t)|^{2}  \nonumber\\
&\geq 8\pi\delta^{2}e^{-2\kappa t}\sum_{n=1}^{\infty}\frac{([(\frac{1}{2}+\delta t)e^{-\kappa t}]^{2})^{n-1}}{n^{10}}
 \end{align}
Similarly let $g(t)=(\frac{1}{2}+\delta t)e^{-\kappa t}>1$, then the series in (\ref{e48}) fails to converge. Since $0<\kappa<\frac{2\delta}{2e-1}$, we only need to choose $T_{2}=\frac{1}{\kappa}$ to deduce the occurrence of singularity.
\end{proof}

\subsection{The proof of Theorem \ref{th24}}
\begin{proof}
Similar to the proof of Theorem \ref{th23}, it's sufficient for us to estimate
\begin{equation}\label{e49}
nw_{n}(t)\geq\frac{\delta}{n^{4}}(\frac{1}{2}+\delta t)^{n-1}e^{-n\kappa t}.
\end{equation}
In the case of $n=1$, it's easy to see
$$
w_{1}(t)=\frac{A_{1}}{2}e^{-\kappa t}\geq\delta e^{-\kappa t}.
$$
Then we assume that (\ref{e49}) holds for $l=1,2,...,n-1$,
\begin{equation}\label{e410}
lw_{l}(t)\geq\frac{\delta}{l^{4}}(\frac{1}{2}+\delta t)^{l-1}e^{-l\kappa t}.
\end{equation}
 (\ref{e317}) and (\ref{e410}) yield that
\begin{align}\label{e411}
nw_{n}(t)
&=\frac{nA_{n}}{2}e^{-n^{\alpha}\kappa t}+ne^{-n^{\alpha}\kappa t}\int^{t}_{0}e^{n^{\alpha}\kappa s}\sum_{l=1}^{n-1}lw_{l}(s)(n-l)w_{n-l}(s)ds  \nonumber\\
&\geq \frac{nA_{n}}{2}e^{-n^{\alpha}\kappa t}+\delta^{2}n\sum_{l=1}^{n-1}\frac{1}{l^{4}(n-l)^{4}}e^{-n^{\alpha}\kappa t}\int^{t}_{0}e^{(n^{\alpha}-n)\kappa s}(\frac{1}{2}+\delta s)^{n-2}ds \nonumber\\
&\geq \frac{\delta}{n^{4}}e^{-n\kappa t}+\delta^{2}\frac{n-1}{n^{4}}e^{-n^{\alpha}\kappa t}\int^{t}_{0}e^{(n^{\alpha}-n)\kappa s}(\frac{1}{2}+\delta s)^{n-2}ds \nonumber\\
&=\frac{\delta}{n^{4}}e^{-n\kappa t}+e^{-n\kappa t}\delta^{2}\frac{n-1}{n^{4}}e^{(n-n^{\alpha})\kappa t}\int^{t}_{0}e^{(n^{\alpha}-n)\kappa s}(\frac{1}{2}+\delta s)^{n-2}ds \nonumber\\
&\geq \frac{\delta}{n^{4}}e^{-n\kappa t}+e^{-n\kappa t}\delta^{2}\frac{n-1}{n^{4}}\int^{t}_{0}(\frac{1}{2}+\delta s)^{n-2}ds \nonumber\\
&\geq \frac{\delta}{n^{4}}(\frac{1}{2}+\delta t)^{n-1}e^{-n\kappa t},
 \end{align}
where the second inequality in (\ref{e411}) uses $0<\alpha<1$ and the third inequality in (\ref{e411}) follows from the fact
$$
e^{(n-n^{\alpha})\kappa t}\int^{t}_{0}e^{(n^{\alpha}-n)\kappa s}(\frac{1}{2}+\delta s)^{n-2}ds\geq \int^{t}_{0}(\frac{1}{2}+\delta s)^{n-2}ds,~~for~t\geq0,~0<\alpha<1.
$$
Define
$$h(t)=e^{(n-n^{\alpha})\kappa t}\int^{t}_{0}e^{(n^{\alpha}-n)\kappa s}(\frac{1}{2}+\delta s)^{n-2}ds-\int^{t}_{0}(\frac{1}{2}+\delta s)^{n-2}ds.$$
It's obvious that $h(0)=0$, and a simple computation shows that
\begin{equation}\nonumber
h'(t)= (n-n^{\alpha})\kappa e^{(n-n^{\alpha})\kappa t}\int^{t}_{0}e^{(n^{\alpha}-n)\kappa s}(\frac{1}{2}+\delta s)^{n-2}ds>0,~~for~t\geq0,~0<\alpha<1,
 \end{equation}
then $h(t)\geq0,~~for~t\geq0,~0<\alpha<1$.
\end{proof}

\begin{remark}
The technology in our proof mainly relies on the formal symmetry of ODEs in (\ref{e316}). In fact, if this symmetry is lost, we find that the approach also can be used to deduce the singularity. For example, let's consider following ODEs
\begin{equation}\label{e412}
\frac{dw_{n}(t)}{dt}-\sum_{l=1}^{n-1}lw_{l}(t)w_{n-l}(t)+\kappa n^{\alpha}w_{n}(t)=0.
\end{equation}
Based on the fact
%\begin{equation}
\begin{align}\label{e413}
\sum_{l=1}^{n-1}lw_{l}(t)w_{n-l}(t)
 &=\sum_{l=1}^{n-1}\sqrt{l}w_{l}(t)\sqrt{l}w_{n-l}(t)\nonumber\\
 &=w_{1}w_{n-1}+2w_{2}w_{n-2}+\cdot\cdot\cdot+(n-2)w_{n-2}w_{2}+(n-1)w_{n-1}w_{1}\nonumber\\
 &= w_{1}w_{n-1}+(n-1)w_{n-1}w_{1}+2w_{2}w_{n-2}+(n-2)w_{n-2}w_{2}+\cdot\cdot\cdot \nonumber\\
 &\geq 2(\sqrt{1}\sqrt{n-1})w_{1}w_{n-1}+2(\sqrt{2}\sqrt{n-2})w_{2}w_{n-2}+\cdot\cdot\cdot \nonumber\\
 &=\sum_{l=1}^{n-1}\sqrt{l}w_{l}(t)\sqrt{n-l}w_{n-l}(t)
\end{align}
%\end{equation}
and the ODEs' comparison principle, we only need to deal with
\begin{equation}\label{e414}
\frac{dw_{n}(t)}{dt}-\sum_{l=1}^{n-1}\sqrt{l}w_{l}(t)\sqrt{n-l}w_{n-l}(t)+\kappa n^{\alpha}w_{n}(t)=0,
\end{equation}
which possess the construction of symmetry. Thus the similar argument in this section indicates that the solutions to (\ref{e414}) would develop into singularities in finite time, hence holds for (\ref{e412}).
\end{remark}
\begin{remark}
Based on these singularity results, it's not difficult to find that smoothing effects due to weak dissipation can't prevent the formation of singularity, thus the global existence is expected when with strong dissipation ($\alpha>1$) for this model. However, here we can't give a super bound for solutions due to the limitation of the spectral method.
\end{remark}

\section*{Acknowledgements}
This project is supported by National Natural Science Foundation of China
(No:11571057).

          % Non-BibTeX users please use

\end{document}